\documentclass[a4paper,11pt]{article}

\usepackage[top=24mm,bottom=25mm,left=25mm,right=25mm]{geometry}
\usepackage[T1]{fontenc}
\usepackage{lmodern}
\usepackage{amsmath,amssymb,amsthm,mathtools}
\usepackage{bm}
\usepackage{booktabs}
\usepackage{enumitem}
\usepackage{titlesec}
\usepackage[hidelinks]{hyperref}
\usepackage{url}

\allowdisplaybreaks[2]
\setlist[enumerate]{leftmargin=2.3em,itemsep=0.15em,topsep=0.35em}

\titleformat{\section}
  {\large\bfseries}
  {\thesection.}{0.55em}{}
\titleformat{\subsection}
  {\normalsize\bfseries}
  {\thesubsection}{0.55em}{}
\titlespacing*{\section}{0pt}{1.55em}{0.7em}
\titlespacing*{\subsection}{0pt}{1.15em}{0.45em}

\newtheorem{theorem}{Theorem}[section]
\newtheorem{proposition}[theorem]{Proposition}
\newtheorem{lemma}[theorem]{Lemma}
\newtheorem{corollary}[theorem]{Corollary}
\theoremstyle{definition}
\newtheorem{definition}[theorem]{Definition}
\theoremstyle{remark}
\newtheorem{remark}[theorem]{Remark}

\newcommand{\R}{\mathbb{R}}

\newcommand{\Nrm}[1]{\left\lVert #1\right\rVert}
\newcommand{\ip}[2]{\left\langle #1,#2\right\rangle}
\newcommand{\tr}{\operatorname{tr}}

\begin{document}
\thispagestyle{plain}

% ---------- title block ----------
\begin{center}
  {\fontsize{17}{22}\selectfont\bfseries
   Determinant Factorization of Left Multiplication\\[0.15em]
   in the 32-Dimensional Cayley--Dickson Algebra\par}
  \vspace{0.95em}
  {\normalsize Shoot Koebisu\par}
  \vspace{0.18em}
  {\small Hiroshima University High School, Fukuyama\par}
  \vspace{0.12em}
  {\small \href{mailto:shoot.koe@gmail.com}{shoot.koe@gmail.com}\par}
\end{center}

\vspace{0.8em}
\begin{center}
\begin{minipage}{0.90\linewidth}
\small
\noindent\textbf{Abstract.}\quad
Let $A_5$ denote the 32-dimensional Cayley--Dickson algebra, and let
$L_x(y)=xy$ be left multiplication by $x\in A_5$.  Put
$N(x)=\Nrm{x}^2$ and $B_x=L_{x^*}L_x$, and define
\[
q_k(x)=\frac14\tr(B_x^k)-N(x)^k\qquad (1\le k\le 7).
\]
From these power sums, we construct a homogeneous polynomial $D_{14}$ of degree
$14$ by Newton's identities.  Using the fourfold multiplicity of eigenspaces
and the existence of the eigenvalue $1$ in the eigentheory of
Biss--Christensen--Dugger--Isaksen, we prove that
\[
\det L_x=N(x)^2D_{14}(x)^2
\]
for every $x\in A_5$.  We further show that a nonzero element $x$ is a left
zero divisor if and only if $D_{14}(x)=0$, and that
\[
0\le D_{14}(x)\le N(x)^7.
\]
On the embedded sedenion subalgebra $A_4\subset A_5$, an additional norm
factor occurs in the reduced factor, whereas on $A_5$ itself the norm
polynomial $N$ does not divide $D_{14}$.  The latter statement is proved by an
explicit exact certificate after complexification.  These results provide a
determinant-level partial answer to the problem of studying characteristic
polynomials in higher Cayley--Dickson algebras.
\end{minipage}
\end{center}

\vspace{0.9em}
\section{Introduction}
The Cayley--Dickson construction starts from the real numbers $A_0=\R$ and
recursively doubles the dimension by setting
\[
A_n=A_{n-1}\oplus A_{n-1}.
\]
The algebras $A_1,A_2,A_3$ are isomorphic to the complex numbers, quaternions,
and octonions, respectively; $A_4$ is the 16-dimensional sedenion algebra, and
$A_5$ is the 32-dimensional Cayley--Dickson algebra.  Zero divisors occur for
$n\ge4$, and the degeneracy of left multiplication is therefore a basic
structural problem in these algebras \cite{Moreno1998,BDI2008}.

For $x\in A_5$, let
\[
L_x:A_5\to A_5,\qquad L_x(y)=xy
\]
denote left multiplication.  Since $A_5$ is a 32-dimensional real vector
space, $L_x$ is represented by a $32\times32$ real matrix.  Hence a nonzero
$x$ is a left zero divisor if and only if
\[
\ker L_x\ne0,
\]
or equivalently if and only if $\det L_x=0$.

In the author's previous work \cite{Koebisu2026}, an explicit factorization
for the sedenions was obtained:
\[
\det L_a=N(a)^4D_2(a)^2.
\]
The purpose of the present paper is to determine what form of determinant
factorization persists in the next Cayley--Dickson algebra $A_5$.

The main input is the eigentheory developed by
Biss--Christensen--Dugger--Isaksen \cite{BCDI2009}.  For the normalized
operator
\[
M_x=\frac{1}{N(x)}L_{x^*}L_x,
\]
they prove that every eigenspace has real dimension divisible by four and that,
for $x\ne0$, the eigenvalue $1$ is always present.  In the same paper,
Question~9.5 asks for the study of the characteristic polynomial of elements in
higher Cayley--Dickson algebras \cite[Question~9.5]{BCDI2009}.

The present paper does not determine the full characteristic polynomial.
Instead, it translates the determinant-level spectral information into a
basis-independent polynomial identity.  A technical point is that the
nontrivial eigenvalues need not admit a global continuous labeling as $x$
varies.  We therefore do not define the reduced factor by first choosing and
multiplying eigenvalue branches.  Rather, we first define a global degree-$14$
polynomial $D_{14}$ from trace power sums using Newton's identities, and only
then prove that it agrees with the relevant spectral product.

Our main result is
\[
\det L_x=N(x)^2D_{14}(x)^2.
\]
We also prove that the zero set of $D_{14}$ is precisely the set of nonzero
left zero divisors, establish the bound $0\le D_{14}\le N^7$, describe the
restriction to the sedenion subalgebra, and show that $N\nmid D_{14}$ on
$A_5$.  The last statement gives a structural contrast with $A_4$, where the
corresponding reduced factor contains an additional norm factor.

\section{Cayley--Dickson algebras and left multiplication}
\subsection{Construction and inner product}
We use the Cayley--Dickson convention of \cite{BCDI2009}.  Set $A_0=\R$ and,
recursively,
\[
A_n=A_{n-1}\times A_{n-1}.
\]
For $a,b,c,d\in A_{n-1}$, define
\begin{align}
(a,b)^*&=(a^*,-b),\label{eq:conj}\\
(a,b)(c,d)&=(ac-d^*b,\;da+bc^*).\label{eq:product}
\end{align}
Let $\operatorname{Re}(x)=(x+x^*)/2$ and define the standard real inner
product by
\[
\ip{x}{y}:=\operatorname{Re}(xy^*).
\]
With respect to the standard basis $e_0,\ldots,e_{2^n-1}$,
\[
\ip{x}{y}=\sum_{j=0}^{2^n-1}x_jy_j,
\]
so this is the usual Euclidean inner product.  In particular it is positive
definite, and we write
\[
N(x):=\Nrm{x}^2=\ip{x}{x}=\sum_{j=0}^{2^n-1}x_j^2.
\]
Define the left and right multiplication operators by
\[
L_x(y)=xy,\qquad R_x(y)=yx.
\]

\begin{lemma}[Adjoint formula]\label{lem:adjoint}
For every $x\in A_n$, the adjoint of $L_x$ with respect to the standard real
inner product is
\[
L_x^*=L_{x^*}.
\]
This is \cite[Lemma 2.3]{BCDI2009}.
\end{lemma}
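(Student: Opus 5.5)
We prove $\ip{xy}{z}=\ip{y}{x^*z}$ for all $x,y,z\in A_n$ by induction on $n$, using the recursive formulas \eqref{eq:conj} and \eqref{eq:product}. Since the inner product is the Euclidean one in the standard basis, and the basis of $A_n$ is the union of the bases of the two copies of $A_{n-1}$, we have
\[
\ip{(a,b)}{(c,d)}=\ip{a}{c}+\ip{b}{d}.
\]
For $n=0$ everything is real and commutative, so the claim is trivial. We also record two auxiliary facts, proved alongside the main claim in the same induction. The first is that conjugation is an isometry: $\ip{u^*}{v^*}=\ip{u}{v}$, which is immediate from \eqref{eq:conj} and induction. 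The second is the companion identity for right multiplication, $\ip{yx}{z}=\ip{y}{zx^*}$, i.e.\ $R_x^*=R_{x^*}$. The companion identity is needed because the product formula mixes left and right multiplications in the lower level.

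For the inductive step, write $x=(a,b)$, $y=(c,d)$, $z=(e,f)$. Then $x^*=(a^*,-b)$, and
\[
xy=(ac-d^*b,\;da+bc^*),\qquad x^*z=(a^*e+f^*b,\;fa^*-be^*).
\]
We need to show
\[
\ip{ac}{e}-\ip{d^*b}{e}+\ip{da}{f}+\ip{bc^*}{f}
=\ip{c}{a^*e}+\ip{c}{f^*b}+\ip{d}{fa^*}-\ip{d}{be^*}.
\]
We match the terms in pairs.
\begin{itemize}
\item $\ip{ac}{e}=\ip{c}{a^*e}$ by the inductive $L$-adjoint.
\item $\ip{da}{f}=\ip{d}{fa^*}$ by the inductive $R$-adjoint.
\item $\ip{bc^*}{f}=\ip{c^*}{b^*f}=\ip{c}{(b^*f)^*}$, using the $L$-adjoint and then the isometry. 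This equals $\ip{c}{f^*b}$ provided $(b^*f)^*=f^*b$, which is the anti-automorphism property $(uv)^*=v^*u^*$ in $A_{n-1}$. That property is itself a standard inductive consequence of \eqref{eq:conj} and \eqref{eq:product}, and we add it to the induction.
\item $-\ip{d^*b}{e}=-\ip{d^*}{eb^*}=-\ip{d}{(eb^*)^*}=-\ip{d}{be^*}$, using the $R$-adjoint, the isometry, and the anti-automorphism property.
\end{itemize}
The right-multiplication identity at level $n$ is verified by the identical bookkeeping, with $yx=(ca-b^*d,\;bc+da^*)$.

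The main obstacle is organizing the simultaneous induction so that no step is circular. There are four statements to carry: the $L$-adjoint, the $R$-adjoint, the isometry, and the anti-automorphism. Each must use only lower-level facts, and the anti-automorphism needs to be checked carefully from \eqref{eq:product} with the paper's sign conventions. Alternatively, since the paper explicitly cites \cite[Lemma 2.3]{BCDI2009}, one may simply invoke that reference. The computation above is the self-contained verification we would present.
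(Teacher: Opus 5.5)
Your proof is correct, and it checks out against the paper's conventions. With $x=(a,b)$, $z=(e,f)$ one has $x^*z=(a^*e+f^*b,\,fa^*-be^*)$. All four pairings in your left-multiplication step are valid. The right-multiplication step, where $zx^*=(ea^*+b^*f,\,fa-be)$, closes using only the level-$(n-1)$ adjoint identities and $a^{**}=a$. The anti-automorphism you add also holds for \eqref{eq:conj}--\eqref{eq:product}: for $u=(a,b)$, $v=(c,d)$, both $(uv)^*$ and $v^*u^*$ equal $(c^*a^*-b^*d,\,-da-bc^*)$.

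The paper gives no argument of its own. It simply imports the identity from Lemma~2.3 of Biss--Christensen--Dugger--Isaksen, which is legitimate because it adopts their doubling convention verbatim. The citation buys brevity. Your route buys a self-contained check against the exact formulas used here, which is worth having because Cayley--Dickson sign conventions vary in the literature.

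The bookkeeping you flag as the main obstacle is lighter than you fear. The isometry (conjugation fixes $e_0$ and negates every other basis vector) and the anti-automorphism do not involve the inner-product identities, so each can be proved first by its own induction. After that, the right-multiplication identity need not be carried at all, since at each level it follows from the left one by conjugation: $\langle yx,z\rangle=\langle x^*y^*,z^*\rangle=\langle y^*,xz^*\rangle=\langle y,zx^*\rangle$. Only the left adjoint identity then needs a genuine induction, so no circularity can arise.
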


Consequently,
\[
B_x:=L_{x^*}L_x=L_x^*L_x
\]
is a real symmetric positive semidefinite operator.  In particular, all of its
eigenvalues are nonnegative real numbers.  Moreover,
\begin{equation}
\det B_x=\det(L_x^*L_x)=(\det L_x)^2.
\label{eq:detB}
\end{equation}

\subsection{Normalized operator and known eigentheory}
For $x\ne0$, following \cite[Definition 3.1]{BCDI2009}, define
\[
M_x:=\frac{1}{N(x)}B_x
=\frac{1}{N(x)}L_{x^*}L_x.
\]

\begin{theorem}[Fourfold multiplicity of eigenspaces]\label{thm:fourfold}
Let $n\ge2$.  For every $x\in A_n\setminus\{0\}$, each eigenspace of $M_x$ has
real dimension divisible by four \cite[Proposition 3.20]{BCDI2009}.
\end{theorem}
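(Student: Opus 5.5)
Theorem~\ref{thm:fourfold} is quoted from \cite[Proposition 3.20]{BCDI2009}, so within the paper it is a citation and needs no new proof. Still, here is how I would reconstruct it. The plan is to build a quaternionic structure on each eigenspace of $M_x$: three anticommuting complex structures $I,J,K=IJ$ that commute with $M_x$. Each eigenspace then becomes a left module over $\mathbb{H}$, and its real dimension is divisible by four. Since $M_x$ is a positive multiple of $B_x=L_x^*L_x$, it is enough to work with $B_x$.

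I would proceed by induction on $n$, using the doubling $A_n=A_{n-1}\times A_{n-1}$ and the product formula \eqref{eq:product}. The natural candidates for the complex structures come from multiplication by the imaginary units of a quaternion subalgebra. The easy case is $x$ lying in a subalgebra isomorphic to $\mathbb{H}$ or to the octonions, where $L_x$ is a similarity and every vector is an eigenvector. For general $x$ I would first use the automorphism group: a suitable automorphism of $A_n$ (automorphisms preserve the norm, so they conjugate $B_x$ orthogonally) moves $x$ into a normal form $x=(a,b)$ with $a,b$ of special shape, for instance spanned by few basis elements. One then checks that the maps $y\mapsto (y_1 u, y_2 u')$, for appropriately chosen unit imaginaries $u,u'$, commute with $L_{x^*}L_x$.

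A cleaner route, and the one I would try first, follows \cite{BCDI2009}. Write $x=\alpha+\beta$ with $\alpha=\operatorname{Re}(x)$ real and $\beta$ imaginary. Then
\[
L_{x^*}L_x=\alpha^2+\alpha(L_{\beta^*}+L_\beta)+L_{\beta}^*L_{\beta}=\alpha^2+N(\beta)\cdot(\text{stuff})+L_\beta^*L_\beta,
\]
and since $L_{\beta^*}=-L_\beta$ the cross term vanishes. This reduces the problem to imaginary $\beta$, for which $L_\beta$ is skew-adjoint and $B_\beta=-L_\beta^2$. The key facts to establish are:
\begin{enumerate}
\item $L_\beta$ commutes with $L_\beta^2$. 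This is trivial.
\item On each eigenspace $E_\lambda$ of $-L_\beta^2$ with $\lambda>0$, the operator $I=\lambda^{-1/2}L_\beta$ is a complex structure.
\item There is a second complex structure on $E_\lambda$ that anticommutes with $I$.
\end{enumerate}
For the third fact I would take $J$ to be right multiplication by a suitable unit imaginary, or more precisely the structure coming from the $\mathbb{Z}/2$-grading of the Cayley--Dickson doubling. One verifies that $R_{e}$ or the conjugation-twisted map $y\mapsto (y^*)$-type operator anticommutes with $L_\beta$ and preserves $E_\lambda$. The kernel $E_0$ needs a separate argument, for example via complementary dimension counts, or by applying the same structures to $\ker L_\beta$ directly.

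I expect the main obstacle to be constructing $J$ in general. Cayley--Dickson algebras beyond the octonions are not alternative, so commutation identities such as $R_eL_\beta=-L_\beta R_e$ fail for general $\beta$. One must exploit the specific structure given by \eqref{eq:conj} and \eqref{eq:product}, probably by reducing to $\beta$ orthogonal to a chosen basis direction and proving the identities by induction on $n$. Since all of this is carried out in \cite{BCDI2009}, in the paper I would simply invoke \cite[Proposition 3.20]{BCDI2009}.
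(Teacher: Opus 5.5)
Citing \cite[Proposition~3.20]{BCDI2009} is exactly what the paper does, so for the paper's purposes your proposal is complete. Your reconstruction, however, is not a proof. Step~3, the complex structure $J$ anticommuting with $L_\beta$, is never produced, and none of the candidates you name can supply it.

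A right multiplication cannot be the uniform choice. In $A_2=\mathbb{H}$ every $R_e$ commutes with every $L_\beta$ by associativity, so anticommutation would force $L_\beta R_e=0$.

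Conjugation, and anything built from it together with $L_\beta$ and $R_\beta$, is useless too. Every eigenspace of $M_\beta$ other than the $1$-eigenspace is orthogonal to $1$ and $\beta$. Each $y\perp 1,\beta$ is imaginary and satisfies $y\beta=-\beta y$. So on such an eigenspace conjugation is $-1$ and $R_\beta=-L_\beta$, and every such composite commutes with $L_\beta$.

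The automorphism route also fails in $A_5$. For $n\ge4$ the automorphism group of $A_n$ is $G_2$ up to a finite group, hence only $14$-dimensional. That is far too small to move a generic element into a normal form with few coordinates.

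Your instinct to use the doubling and to make $\beta$ orthogonal to a fixed basis direction is right. The operator to use, though, is a left multiplication: $J=L_{i_n}$, where $i_n:=(0,1)$ is the basis vector $e_{2^{n-1}}$. By \eqref{eq:product}, $L_{i_n}(c,d)=(-d^*,c^*)$, so $L_{i_n}^2=-I$. For imaginary $b=(p,q)$, a direct expansion gives
\[
L_{i_n}L_b+L_bL_{i_n}=-2\operatorname{Re}(q)=-2\ip{b}{i_n}.
\]
Now write $x=r+s\,i_n+b$ with $r,s\in\R$ and $b\perp 1,i_n$. It follows that $B_x=B_b+(r^2+s^2)I$, so $M_x$ has the same eigenspaces as $B_b=-L_b^2$. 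This gives the orthogonality to $i_n$ without any automorphism.

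Take an eigenspace of $B_b$ with eigenvalue $\lambda>0$. The operators $\lambda^{-1/2}L_b$ and $L_{i_n}$ are anticommuting complex structures preserving it, so it is a left $\mathbb{H}$-module and its dimension is divisible by four. The kernel is then covered by your complementary dimension count, since $4\mid 2^n$.

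Your ``easy case'' is also misstated. For a unit $b\perp 1,i_n$, one checks that $1,b,i_n,bi_n$ span a copy of $\mathbb{H}$. Yet the formula for $B_x$ shows that, for $n\ge4$, every zero divisor is such a $b$ up to scaling. So lying in a subalgebra isomorphic to $\mathbb{H}$ does not make $L_x$ a similarity. This does hold for the standard copies $A_2,A_3\subset A_n$.
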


\begin{theorem}[Existence of the eigenvalue $1$]\label{thm:one}
For every $x\in A_n\setminus\{0\}$, the number $1$ is an eigenvalue of $M_x$
\cite[Proposition 3.13]{BCDI2009}.
\end{theorem}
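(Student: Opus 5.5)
The plan is to exhibit an explicit eigenvector of $M_x$ with eigenvalue $1$, namely the unit element $e_0=1\in A_n$, rather than rely on any spectral machinery. Since $L_x(1)=x\cdot 1=x$, we get
\[
B_x(1)=L_{x^*}L_x(1)=x^*x .
\]
It therefore suffices to prove the identity $x^*x=N(x)\cdot 1$ for every $x\in A_n$. Given this, $B_x(1)=N(x)\cdot 1$, so $M_x(1)=1$ whenever $x\ne0$. Since $1\ne0$, the number $1$ is then an eigenvalue of $M_x$.

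\medskip

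I will prove $x^*x=N(x)$ (viewed as a real scalar in $A_n$) by induction on $n$, using the conventions \eqref{eq:conj} and \eqref{eq:product}. For $n=0$ it is just $x\cdot x=x^2$ in $\R$.

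For the inductive step write $x=(a,b)$ with $a,b\in A_{n-1}$, so $x^*=(a^*,-b)$. By \eqref{eq:product} with first factor $(a^*,-b)$ and second factor $(a,b)$,
\[
x^*x=\bigl(a^*a-b^*(-b),\; b a^* + (-b)a^*\bigr)=\bigl(a^*a+b^*b,\;0\bigr).
\]
By the induction hypothesis, $a^*a=N(a)$ and $b^*b=N(b)$ are real scalars. Hence $x^*x=(N(a)+N(b),0)$. This is the real scalar $N(a)+N(b)=N(x)$ embedded in $A_n$, because the real line sits in $A_n$ as $\R\times\{0\}$ recursively and $N$ is the sum of squares of coordinates.

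\medskip

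The only point needing care is bookkeeping with the conventions. One must check that the second component really cancels: it is $da+bc^*$ with $c=a$, $d=b$ and first factor $(a^*,-b)$, which gives $b a^*+(-b)a^*=0$. One must also check that real scalars commute with and are fixed by the relevant operations in $A_{n-1}$, which is immediate by induction since $(r,0)^*=(r,0)$ for real $r$. No appeal to Theorem~\ref{thm:fourfold} or Lemma~\ref{lem:adjoint} is needed; the argument is a direct computation.
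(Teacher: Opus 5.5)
Your proof is correct. The paper gives no argument of its own for this statement; it imports it verbatim from Proposition~3.13 of Biss--Christensen--Dugger--Isaksen. Your proposal therefore replaces a citation with a self-contained verification. Because $e_0$ is a right identity, $B_x e_0=x^*x$, and the doubling formula gives $x^*x=(a^*a+b^*b,\,0)=N(x)e_0$ by induction, so $M_xe_0=e_0$. Your bookkeeping of both components is right.

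The one ingredient you use without comment, $L_x(1)=x$, follows from the same formula by the same induction: $(a,b)(1,0)=(a\cdot 1,\;b\cdot 1^*)=(a,b)$. Your remark about real scalars commuting with the operations is not needed, since only bilinearity of the product enters the computation.

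What your route buys:
\begin{itemize}
\item It shows that, with $M_x$ defined on all of $A_n$ as in this paper, the statement needs none of the eigentheory.
\item It produces an explicit eigenvector. A similar computation using $x^2=2\operatorname{Re}(x)\,x-N(x)$ also gives $M_x x=x$, so $\operatorname{span}\{1,x\}$ lies in the $1$-eigenspace.
\end{itemize}

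What your route does not give is multiplicity information. The paper, however, never asks this theorem for more than existence. The fourfold block $\beta_1=N(x)$ in Lemma~\ref{lem:block} comes from combining existence with Theorem~\ref{thm:fourfold}, and your argument fits into that step unchanged.
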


Since $M_x$ is symmetric positive semidefinite, it is diagonalizable and all
of its eigenvalues are nonnegative \cite[Proposition 3.9]{BCDI2009}.

\section{Polynomial construction of $D_{14}$}
In this section we avoid defining $D_{14}$ by locally labeling eigenvalues.
Instead, we construct it directly as a polynomial from trace invariants.

\begin{definition}[Power-sum invariants]\label{def:qk}
For $x\in A_5$, put $B_x=L_{x^*}L_x$.  For $1\le k\le7$, define
\begin{equation}
q_k(x):=\frac14\tr(B_x^k)-N(x)^k.
\label{eq:qk}
\end{equation}
\end{definition}

\begin{lemma}\label{lem:qhomogeneous}
Each $q_k$ is a homogeneous polynomial of degree $2k$ on
$A_5\cong\R^{32}$.
\end{lemma}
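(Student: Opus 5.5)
The plan is to prove homogeneity and polynomiality directly from the bilinearity of the Cayley--Dickson product, with no spectral input. First, by induction on $n$ using the product formula \eqref{eq:product} and the conjugation formula \eqref{eq:conj}, the product $A_n\times A_n\to A_n$ is real bilinear and conjugation is real linear. Consequently, in the standard basis, each entry of the $32\times32$ matrix of $L_x$ is a linear form in the coordinates $x_0,\dots,x_{31}$. The same holds for $L_{x^*}$, since $x\mapsto x^*$ is linear. So each entry of $B_x=L_{x^*}L_x$ is a homogeneous quadratic polynomial in the coordinates of $x$; equivalently, the map $x\mapsto B_x$ satisfies $B_{tx}=t^2B_x$ for $t\in\R$ and is polynomial.

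Next, the entries of $B_x^k$ are sums of products of $k$ entries of $B_x$. Each such product is homogeneous of degree $2k$, so $\tr(B_x^k)$ is a homogeneous polynomial of degree $2k$ (or identically zero, which still counts as homogeneous). Similarly, $N(x)=\sum_j x_j^2$ is homogeneous quadratic, so $N(x)^k$ is homogeneous of degree $2k$. Since homogeneous polynomials of a fixed degree form a vector space, the combination $\tfrac14\tr(B_x^k)-N(x)^k$ is a homogeneous polynomial of degree $2k$, which proves the lemma.

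The only point needing care is the bilinearity of the recursive product. This follows at once by induction: if the product on $A_{n-1}$ is bilinear and $*$ is linear, then each component $ac-d^*b$ and $da+bc^*$ is bilinear in $((a,b),(c,d))$. There is no real obstacle here. I would also note in passing that the factor $\tfrac14$ plays no role for polynomiality. Its purpose, anticipated by Theorem~\ref{thm:fourfold}, is only to make the later Newton-identity construction integral-looking in the eigenvalues.
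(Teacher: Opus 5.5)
Your proof is correct and follows essentially the same route as the paper: the entries of $L_x$ (and $L_{x^*}$) are linear in the coordinates, so the entries of $B_x$ are quadratic forms, and hence $\tr(B_x^k)$ and $N(x)^k$, and their combination $q_k$, are homogeneous of degree $2k$. The only addition is your explicit inductive check that the Cayley--Dickson product is bilinear, which the paper takes for granted.
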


\begin{proof}
With respect to any fixed real basis, each entry of $L_x$ is linear in the
coordinates of $x$.  Hence each entry of $B_x=L_{x^*}L_x$ is a homogeneous
quadratic polynomial, and each entry of $B_x^k$, as well as
$\tr(B_x^k)$, is homogeneous of degree $2k$.  The same is true of $N(x)^k$,
so the claim follows from \eqref{eq:qk}.
\end{proof}

\begin{definition}[Newton recursion and $D_{14}$]\label{def:D14}
Set $E_0(x)=1$.  For $1\le m\le7$, define recursively
\begin{equation}
E_m(x)
:=\frac1m\sum_{k=1}^{m}(-1)^{k-1}E_{m-k}(x)q_k(x).
\label{eq:newton}
\end{equation}
Finally, set
\begin{equation}
D_{14}(x):=E_7(x).
\label{eq:D14def}
\end{equation}
\end{definition}

\begin{proposition}\label{prop:Dpoly}
The polynomial $E_m$ is homogeneous of degree $2m$.  In particular,
$D_{14}=E_7$ is homogeneous of degree $14$.
\end{proposition}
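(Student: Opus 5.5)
The plan is strong induction on $m$, using Lemma~\ref{lem:qhomogeneous} and the recursion \eqref{eq:newton}. Two standard facts about polynomials on $\R^{32}$ do all the work. First, the product of homogeneous polynomials of degrees $a$ and $b$ is homogeneous of degree $a+b$. Second, a real linear combination of homogeneous polynomials of the same degree $d$ is homogeneous of degree $d$, or is the zero polynomial. We adopt the convention that the zero polynomial counts as homogeneous of every degree; this covers any term that might vanish identically.

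The base case is $m=0$: $E_0=1$ is homogeneous of degree $0$. For the inductive step, fix $1\le m\le 7$ and assume $E_j$ is homogeneous of degree $2j$ for all $0\le j<m$. Consider the $k$-th summand $(-1)^{k-1}E_{m-k}q_k$ in \eqref{eq:newton}, where $1\le k\le m$.
\begin{enumerate}
\item By the inductive hypothesis, $E_{m-k}$ is homogeneous of degree $2(m-k)$.
\item By Lemma~\ref{lem:qhomogeneous}, $q_k$ is homogeneous of degree $2k$.
\item Hence the summand is homogeneous of degree $2(m-k)+2k=2m$.
\end{enumerate}
Every summand therefore has degree $2m$, independent of $k$. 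Summing them and multiplying by the scalar $1/m$ preserves homogeneity of degree $2m$. So $E_m$ is a polynomial, not merely a function, and it is homogeneous of degree $2m$. Taking $m=7$ and using \eqref{eq:D14def} gives that $D_{14}$ is homogeneous of degree $14$.

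No step here is a real obstacle. The one point to state explicitly is the convention for the zero polynomial. The proposition does not claim $D_{14}\not\equiv 0$; that nonvanishing follows later from the determinant identity $\det L_x=N(x)^2D_{14}(x)^2$ together with the invertibility of $L_x$ for, say, $x=e_0$.

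One could also give a scaling check: $B_{tx}=t^2B_x$ and $N(tx)=t^2N(x)$, so $q_k(tx)=t^{2k}q_k(x)$, and induction gives $E_m(tx)=t^{2m}E_m(x)$. This is consistent with the argument above, but it is redundant once the polynomial-level induction is in place.
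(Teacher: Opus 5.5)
Your proposal is correct and follows essentially the same route as the paper: induction on $m$, with each summand $E_{m-k}q_k$ in the Newton recursion homogeneous of degree $2(m-k)+2k=2m$ by the inductive hypothesis and Lemma~\ref{lem:qhomogeneous}. Your explicit convention that the zero polynomial counts as homogeneous of every degree is a small clarification the paper leaves implicit.
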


\begin{proof}
We argue by induction on $m$.  The statement is clear for $E_0=1$.  Assume it
holds for $E_0,\ldots,E_{m-1}$.  Each term $E_{m-k}q_k$ in
\eqref{eq:newton} has degree
\[
2(m-k)+2k=2m.
\]
Hence $E_m$ is homogeneous of degree $2m$.
\end{proof}

\begin{proposition}[First power sum]\label{prop:q1}
For every $x\in A_5$,
\[
\tr(B_x)=32N(x),\qquad q_1(x)=7N(x).
\]
\end{proposition}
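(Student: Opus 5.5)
The plan is to compute $\tr(B_x)$ directly in the standard basis, using Lemma~\ref{lem:adjoint} to write $B_x=L_x^*L_x$:
\[
\tr(B_x)=\tr(L_x^*L_x)=\sum_{j=0}^{31}\ip{L_x e_j}{L_x e_j}=\sum_{j=0}^{31}N(xe_j).
\]
This reduces the claim to the identity $N(xe_j)=N(x)$ for every standard basis element $e_j$ of $A_5$. Given that identity, $\tr(B_x)=32N(x)$ follows at once, and then
\[
q_1(x)=\tfrac14\cdot 32N(x)-N(x)=7N(x)
\]
by \eqref{eq:qk}.

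The main step is to show $N(xe_j)=N(x)$, that is, right multiplication by a basis unit is an isometry. This is not automatic, since norms are not multiplicative in $A_5$. I would prove it by induction on $n$, for every $x\in A_n$ and every basis element $e$, using \eqref{eq:product}. Basis elements of $A_n$ have the form $(e,0)$ or $(0,e)$ with $e$ a basis element of $A_{n-1}$. For $x=(a,b)$ the product formula gives
\[
(a,b)(e,0)=(ae,\;be^*),\qquad (a,b)(0,e)=(-e^*b,\;ea).
\]
Hence $N(x(e,0))=N(ae)+N(be^*)$ and $N(x(0,e))=N(e^*b)+N(ea)$.

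The inductive statement must therefore cover both left and right multiplication by $e$ and by $e^*=\pm e$. So I would strengthen the hypothesis to: for every basis element $e$ of $A_{n-1}$ and every $a\in A_{n-1}$, $N(ae)=N(ea)=N(a)$. The sign from conjugation is harmless. The left-multiplication case follows symmetrically from the same formulas, since $(e,0)(a,b)=(ea,\;be)$ and $(0,e)(a,b)=(-b^*e,\;ea^*)$. Conjugation preserves $N$, because it only flips signs of coordinates. The base case $A_0=\R$ with $e=1$ is trivial.

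Both products only permute coordinates of $x$ up to sign, so the sums of squares are unchanged and the induction closes. Alternatively, one could cite the standard fact (e.g.\ from \cite{BCDI2009}) that $L_{e_j}$ and $R_{e_j}$ are signed permutation matrices. The only real obstacle is the bookkeeping in this induction: setting up the strengthened hypothesis so that it handles $e$, $e^*$, and the conjugates $a^*,b^*$ appearing in the formulas.
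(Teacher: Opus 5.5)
Your proof is correct, but it takes a different route from the paper. The paper gives a one-line argument: it specializes the trace identity of \cite[Lemma~3.16]{BCDI2009} to $y=x$. You instead prove the needed case directly, and each step holds up. The reduction $\tr(L_x^*L_x)=\sum_j N(xe_j)$ is valid because the standard basis is orthonormal. The four product formulas you write are correct instances of \eqref{eq:product}. Your strengthened induction also closes: left and right multiplication by basis units preserve $N$, basis units satisfy $e^*=\pm e$, and conjugation preserves $N$, so every component of each product has norm $N(a)$ or $N(b)$.

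What your version buys is self-containment. It uses only the multiplication rule, and it shows that every column $xe_j$ of $L_x$ has squared length $N(x)$. It also gives $\tr(B_x)=2^nN(x)$ in every $A_n$ at no extra cost. Moreover, each $R_{e_j}$ is then a linear isometry, so polarization gives $\tr(L_{x^*}L_y)=\sum_j\ip{ye_j}{xe_j}=2^n\ip{x}{y}$ for all $x,y$. The paper's citation buys brevity, at the price of resting on an external lemma whose proof is not reproduced.
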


\begin{proof}
Substituting $y=x$ into \cite[Lemma~3.16]{BCDI2009} gives
\[
\tr(L_{x^*}L_x)=2^5\ip{x}{x}=32N(x).
\]
Therefore, by Definition~\ref{def:qk},
\[
q_1=\frac14\tr(B_x)-N=8N-N=7N.
\]
\end{proof}

\section{Spectral interpretation}
We now show that the polynomial $D_{14}$ constructed above has the expected
interpretation as a product of spectral blocks.

\begin{lemma}[Fourfold block representation]\label{lem:block}
Let $x\in A_5\setminus\{0\}$.  Then there exist nonnegative real numbers
\[
\beta_1(x),\ldots,\beta_8(x),
\]
listed with repetitions allowed, such that the characteristic polynomial of
$B_x$ is
\begin{equation}
\chi_{B_x}(t)=\prod_{j=1}^{8}(t-\beta_j(x))^4.
\label{eq:charblock}
\end{equation}
Moreover, after relabeling, one may choose
\begin{equation}
\beta_1(x)=N(x).
\label{eq:beta1}
\end{equation}
\end{lemma}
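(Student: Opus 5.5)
The plan is to read off the block structure directly from the spectral theorem for $B_x$, and then apply Theorems~\ref{thm:fourfold} and~\ref{thm:one} after rescaling by $N(x)$.

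Fix $x\in A_5\setminus\{0\}$. By Lemma~\ref{lem:adjoint}, $B_x=L_x^*L_x$ is real symmetric and positive semidefinite, so it is orthogonally diagonalizable. Hence $\R^{32}=A_5$ splits as an orthogonal direct sum of eigenspaces $V_{\lambda_1},\dots,V_{\lambda_r}$ with distinct eigenvalues $\lambda_i\ge 0$, and
\[
\chi_{B_x}(t)=\prod_{i=1}^{r}(t-\lambda_i)^{d_i},\qquad d_i=\dim V_{\lambda_i},\qquad \sum_i d_i=32 .
\]
Since $N(x)>0$, we have $M_x=B_x/N(x)$. The eigenspaces of $M_x$ are therefore exactly the $V_{\lambda_i}$, now carrying the eigenvalues $\lambda_i/N(x)$. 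Theorem~\ref{thm:fourfold} with $n=5$ gives $4\mid d_i$ for every $i$. Write $d_i=4m_i$; then $\sum_i m_i=8$.

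Next we list each $\lambda_i$ exactly $m_i$ times. This produces eight nonnegative numbers $\beta_1(x),\dots,\beta_8(x)$, and
\[
\prod_i (t-\lambda_i)^{4m_i}=\prod_{j=1}^8 (t-\beta_j(x))^4,
\]
which is \eqref{eq:charblock}. For \eqref{eq:beta1}, Theorem~\ref{thm:one} says that $1$ is an eigenvalue of $M_x$. Equivalently, $N(x)$ is one of the $\lambda_i$, so it appears among the $\beta_j(x)$, and we relabel so that $\beta_1(x)=N(x)$.

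No step is a real obstacle. The only point requiring care is that the two cited theorems are stated for $M_x$, not $B_x$. Scaling by the positive constant $1/N(x)$ leaves eigenspaces and their dimensions unchanged and only rescales the eigenvalues, so the transfer is immediate. We should also remark that the $\beta_j(x)$ are defined pointwise only, as an unordered multiset with no continuity claimed. This matches the paper's warning that later arguments will go through symmetric functions, namely the power sums $q_k$, rather than through individual branches.
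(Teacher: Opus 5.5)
Your proposal is correct and follows essentially the same route as the paper: you rescale between $M_x$ and $B_x$, use the fourfold multiplicity to group the $32$ eigenvalues into eight blocks, and use the eigenvalue $1$ of $M_x$ to place $N(x)$ among the $\beta_j$. Your explicit appeal to orthogonal diagonalizability, which ensures that eigenspace dimensions equal the exponents in $\chi_{B_x}$, spells out a step the paper's proof leaves implicit.
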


\begin{proof}
Since $B_x=N(x)M_x$, Theorem~\ref{thm:fourfold} implies that every eigenvalue
of $B_x$ has multiplicity divisible by four.  Because
$\dim_{\R}A_5=32=4\cdot8$, listing each eigenvalue as many times as its
multiplicity divided by four yields eight numbers $\beta_j$ satisfying
\eqref{eq:charblock}.

By Theorem~\ref{thm:one}, $1$ is an eigenvalue of $M_x$.  Hence $N(x)$ is an
eigenvalue of $B_x$ with multiplicity at least four, so one fourfold block may
be chosen as $\beta_1=N(x)$.
\end{proof}

\begin{remark}
The individual functions $\beta_j(x)$ need not admit a unique or globally
continuous labeling as $x$ varies.  This is precisely why $D_{14}$ was defined
first from the symmetric trace invariants $q_k$, rather than from a chosen
ordering of the $\beta_j$.
\end{remark}

\begin{proposition}[Spectral interpretation of the Newton construction]\label{prop:spectralD}
Let $x\in A_5\setminus\{0\}$, and choose $\beta_1,\ldots,\beta_8$ as in
Lemma~\ref{lem:block} with $\beta_1=N(x)$.  Then
\begin{equation}
D_{14}(x)=\prod_{j=2}^{8}\beta_j(x).
\label{eq:Dprod}
\end{equation}
In particular, $D_{14}(x)\ge0$.
\end{proposition}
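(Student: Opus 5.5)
Using Lemma~\ref{lem:block}, we will express the power sums $q_k$ in terms of the seven remaining blocks, and then observe that the Newton recursion \eqref{eq:newton} is exactly the classical one for elementary symmetric polynomials. Fix $x\ne0$ and choose $\beta_1=N(x),\beta_2,\ldots,\beta_8$ as in Lemma~\ref{lem:block}. Since $B_x$ is real symmetric, hence diagonalizable, \eqref{eq:charblock} gives
\[
\tr(B_x^k)=4\sum_{j=1}^{8}\beta_j^k .
\]
Dividing by four and subtracting the block $\beta_1^k=N(x)^k$, Definition~\ref{def:qk} yields
\[
q_k(x)=\sum_{j=2}^{8}\beta_j(x)^k=p_k(\beta_2,\ldots,\beta_8)\qquad(1\le k\le 7),
\]
where $p_k$ is the $k$-th power sum in seven variables. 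As a consistency check, for $k=1$ this recovers $q_1=7N$ from Proposition~\ref{prop:q1}.

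Next we compare the recursion \eqref{eq:newton} with Newton's identities. For the elementary symmetric polynomials $e_m$ in the seven variables $\beta_2,\ldots,\beta_8$, Newton's identities read
\[
m\,e_m=\sum_{k=1}^m(-1)^{k-1}e_{m-k}\,p_k,\qquad 1\le m\le 7,
\]
with $e_0=1$. These are exactly the relations \eqref{eq:newton} with $E_m$ in place of $e_m$ and $q_k$ in place of $p_k$. An induction on $m$ will therefore show $E_m(x)=e_m(\beta_2,\ldots,\beta_8)$ for $0\le m\le7$: the base case $E_0=1=e_0$ is immediate, and the inductive step follows because both sides of the recursion agree term by term. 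Taking $m=7$ gives $E_7(x)=e_7=\prod_{j=2}^8\beta_j$, which together with \eqref{eq:D14def} is \eqref{eq:Dprod}. Since each $\beta_j\ge0$ by Lemma~\ref{lem:block}, the product is nonnegative, so $D_{14}(x)\ge0$.

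The main point requiring care is well-definedness, which we expect to be the only real obstacle. The labeling of the $\beta_j$ is not unique, and $N(x)$ may occur among several blocks. We will argue that this does not matter. The multiset $\{\beta_1,\ldots,\beta_8\}$ is determined by $\chi_{B_x}$, so removing one copy of $N(x)$ determines the multiset $\{\beta_2,\ldots,\beta_8\}$ uniquely, and any symmetric function of it is independent of the choice. Note also that the recursion divides only by $m\le7$, so no characteristic issues arise over $\R$. The construction of $D_{14}$ from the global polynomials $q_k$ never used the labeling at all.
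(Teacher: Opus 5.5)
Your proof is correct and follows the paper's own argument. You use $\operatorname{tr}(B_x^k)=4\sum_j\beta_j^k$ to identify $q_k$ as the power sums of $\beta_2,\ldots,\beta_8$, match the recursion \eqref{eq:newton} with Newton's identities to get $E_7=\prod_{j=2}^8\beta_j$, and deduce nonnegativity from positive semidefiniteness. Your extra remarks on the explicit induction and on independence of the labeling are accurate but not strictly needed, since $D_{14}$ is defined globally and the identity holds for any admissible choice of blocks.
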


\begin{proof}
From \eqref{eq:charblock}, for every $k\ge1$,
\[
\tr(B_x^k)=4\sum_{j=1}^{8}\beta_j^k.
\]
Using $\beta_1=N(x)$ gives
\[
q_k(x)=\frac14\tr(B_x^k)-N(x)^k
=\sum_{j=2}^{8}\beta_j^k.
\]
Thus $q_1,\ldots,q_7$ are the power sums of the seven numbers
$\beta_2,\ldots,\beta_8$.  Newton's identities \eqref{eq:newton} show that
$E_m$ is the $m$th elementary symmetric polynomial in these seven numbers.
In particular,
\[
D_{14}(x)=E_7(x)=\beta_2\beta_3\cdots\beta_8.
\]
Since each $\beta_j$ is an eigenvalue of the positive semidefinite operator
$B_x$, all are nonnegative.
\end{proof}

\section{Determinant factorization}
\subsection{The square structure}
\begin{proposition}\label{prop:detBfactor}
For every $x\in A_5$,
\begin{equation}
\det B_x=\bigl(N(x)D_{14}(x)\bigr)^4.
\label{eq:detBfactor}
\end{equation}
\end{proposition}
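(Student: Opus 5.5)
The plan is to split into the cases $x\ne0$ and $x=0$, and in the nonzero case to read the determinant directly off the block form of the characteristic polynomial.

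For $x\ne0$, apply Lemma~\ref{lem:block} with the choice $\beta_1(x)=N(x)$. The determinant of $B_x$ is the product of its eigenvalues with multiplicity. Equivalently, it is $\chi_{B_x}(0)$ up to the sign $(-1)^{32}=1$. From \eqref{eq:charblock} this gives
\[
\det B_x=\prod_{j=1}^{8}\beta_j(x)^4=\Bigl(\beta_1(x)\prod_{j=2}^{8}\beta_j(x)\Bigr)^4 .
\]
Now substitute $\beta_1=N(x)$ and use Proposition~\ref{prop:spectralD}, which identifies $\prod_{j=2}^8\beta_j(x)$ with $D_{14}(x)$. This yields $\det B_x=(N(x)D_{14}(x))^4$ for every nonzero $x$.

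For $x=0$ we have $L_0=0$, so $B_0=0$ and $\det B_0=0$. On the other side, $N(0)=0$, so the right-hand side also vanishes. Alternatively, one can argue that both sides are polynomials in $x$ (the left by Lemma~\ref{lem:qhomogeneous}-type reasoning, the right by Proposition~\ref{prop:Dpoly}). Two polynomials agreeing on the dense set $A_5\setminus\{0\}$ agree everywhere by continuity. I would give the direct evaluation and mention the density argument as a check.

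The only point needing care is to make sure the $\beta_j$ used in the determinant computation are the same ones appearing in Proposition~\ref{prop:spectralD}. This holds because that proposition holds for any admissible choice with $\beta_1=N(x)$, and the product $\prod_j\beta_j^4$ is independent of labeling. So no continuity or global labeling issue arises, since everything is evaluated pointwise. I do not expect a real obstacle. The substantive content has already been absorbed into Theorems~\ref{thm:fourfold} and \ref{thm:one} and the Newton-identity interpretation.
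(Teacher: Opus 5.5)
Your proposal is correct and follows the paper's proof essentially verbatim: the case $x=0$ is trivial because both sides vanish, and for $x\ne0$ you read off $\det B_x=\prod_{j=1}^{8}\beta_j^4$ from the block form in Lemma~\ref{lem:block}, then substitute $\beta_1=N(x)$ and $\prod_{j=2}^{8}\beta_j=D_{14}(x)$ from Proposition~\ref{prop:spectralD}. Your optional polynomial-density check is also valid but not needed.
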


\begin{proof}
If $x=0$, both sides vanish.  Suppose $x\ne0$.  By
Lemma~\ref{lem:block} and Proposition~\ref{prop:spectralD},
\begin{align*}
\det B_x
&=\prod_{j=1}^{8}\beta_j^4
 =\left(\prod_{j=1}^{8}\beta_j\right)^4\\
&=\left(N(x)\prod_{j=2}^{8}\beta_j\right)^4
 =\bigl(N(x)D_{14}(x)\bigr)^4.
\end{align*}
\end{proof}

Combining \eqref{eq:detBfactor} with \eqref{eq:detB} gives
\begin{equation}
(\det L_x)^2=\bigl(N(x)D_{14}(x)\bigr)^4.
\label{eq:absfactor}
\end{equation}
Thus
\[
|\det L_x|=N(x)^2D_{14}(x)^2.
\]
The next lemma removes the sign ambiguity.

\subsection{Nonnegativity of the left-multiplication determinant}
\begin{lemma}\label{lem:nonnegative}
For every $n\ge1$ and every $x\in A_n$,
\[
\det L_x\ge0.
\]
\end{lemma}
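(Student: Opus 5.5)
The plan is a connectedness argument on the nonvanishing locus, supplemented by continuity where $\det L_x$ vanishes. The function $x\mapsto\det L_x$ is a polynomial in the coordinates of $x$ (entries of $L_x$ are linear in $x$), hence continuous on $A_n\cong\R^{2^n}$. At $x=1=e_0$ we have $L_1=\mathrm{id}$, so $\det L_1=1>0$. It therefore suffices to show that $\det L_x$ never takes a negative value. This follows once we know that any point $x$ with $\det L_x<0$ could be joined to $1$ by a path along which $\det L$ does not vanish, which is absurd.

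To avoid needing connectedness of the complement of the zero-divisor locus (which is delicate for $n\ge4$), I would instead use a homotopy that stays inside invertible operators. For arbitrary $x=a+u$ with real part $a$ and imaginary part $u$ ($u^*=-u$), consider $L_{t+u}=t\,\mathrm{id}+L_u$ for $t\in\R$. The key claim is that $L_u$ is skew-adjoint: by Lemma~\ref{lem:adjoint}, $L_u^*=L_{u^*}=-L_u$. A real skew-symmetric operator has purely imaginary eigenvalues $\pm i\mu$, occurring in conjugate pairs together with zeros. Hence
\[
\det(t\,\mathrm{id}+L_u)=t^{m}\prod_{\mu}(t^2+\mu^2),
\]
where $m$ is the multiplicity of the eigenvalue $0$ and the product runs over the pairs. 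Moreover $m$ is even, since $2^n-m$ is even (nonzero eigenvalues pair up) and $2^n$ is even for $n\ge1$. Therefore $\det L_x=\det(a\,\mathrm{id}+L_u)\ge0$ directly, with no need for a topological argument.

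The main step to verify is the linearity decomposition $L_x=a\,\mathrm{id}+L_u$, where $a=\operatorname{Re}(x)$ is viewed as the real scalar $a e_0$. This is immediate from bilinearity of the product and from the fact that $e_0$ is a two-sided identity in every Cayley--Dickson algebra, which follows by induction from \eqref{eq:product} with $(1,0)$. The skew-adjointness of $L_u$ is the only substantive input, and it is supplied by Lemma~\ref{lem:adjoint}. The parity count on the zero eigenspace is what handles the case $a=0$, and it is the one point needing care. I expect no serious obstacle beyond stating the spectral theorem for real skew-symmetric matrices correctly.
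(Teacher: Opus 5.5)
Your argument is correct and is essentially the paper's proof: write $x=a+u$, use Lemma~\ref{lem:adjoint} to get $L_u^*=-L_u$, and factor $\det(a\,I+L_u)$ into nonnegative pieces. The only cosmetic difference is in how the factorization is obtained: the paper uses the real canonical form of $L_u$ with $2^{n-1}$ rotation blocks, where zero eigenvalues appear as blocks with $\sigma_j=0$ because $2^n$ is even, and gets $\prod_j(a^2+\sigma_j^2)$; you instead pair the eigenvalues $\pm i\mu$ and check separately that the zero multiplicity $m$ is even, and the connectedness idea in your first paragraph is not needed.
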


\begin{proof}
Write $x=r1+u$, where $r=\operatorname{Re}(x)\in\R$ and $u^*=-u$.
By linearity of left multiplication,
\[
L_x=rI+L_u.
\]
Lemma~\ref{lem:adjoint} gives
\[
L_u^*=L_{u^*}=-L_u,
\]
so $L_u$ is a real skew-symmetric operator.  By the real canonical form of a
skew-symmetric matrix, there exist an orthogonal matrix $Q$ and numbers
$\sigma_j\ge0$ such that
\[
Q^{\mathsf T}L_uQ
=\bigoplus_{j=1}^{2^{n-1}}
\begin{pmatrix}
0&-\sigma_j\\
\sigma_j&0
\end{pmatrix}.
\]
Here we used the fact that $\dim_{\R}A_n=2^n$ is even.  Therefore
\begin{align*}
\det L_x
&=\det(rI+L_u)\\
&=\prod_{j=1}^{2^{n-1}}
\det\begin{pmatrix}
r&-\sigma_j\\
\sigma_j&r
\end{pmatrix}\\
&=\prod_{j=1}^{2^{n-1}}(r^2+\sigma_j^2)\ge0.
\end{align*}
\end{proof}

\begin{theorem}[Main theorem]\label{thm:main}
Let $A_5$ be the 32-dimensional Cayley--Dickson algebra.  For the homogeneous
polynomial $D_{14}$ of degree $14$ defined in Definition~\ref{def:D14}, one has
\begin{equation}
\det L_x=N(x)^2D_{14}(x)^2
\label{eq:main}
\end{equation}
for every $x\in A_5$.  In coordinates $x=(x_0,\ldots,x_{31})$,
\[
N(x)=x_0^2+x_1^2+\cdots+x_{31}^2.
\]
\end{theorem}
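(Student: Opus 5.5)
The plan is to combine the absolute-value identity already obtained with the sign lemma, and to treat $x=0$ separately. The work is essentially done by the preceding results, so the proof should be short.

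First, for $x=0$, both sides of \eqref{eq:main} vanish. Indeed $L_0=0$, so $\det L_0=0$, and $N(0)=0$. Note that $D_{14}(0)=0$ as well by homogeneity of degree $14$ (Proposition~\ref{prop:Dpoly}), although this is not needed.

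For $x\ne0$, I would invoke Proposition~\ref{prop:detBfactor} together with \eqref{eq:detB} to obtain \eqref{eq:absfactor}:
\[
(\det L_x)^2=\bigl(N(x)D_{14}(x)\bigr)^4 .
\]
Taking nonnegative square roots gives $|\det L_x|=\bigl(N(x)D_{14}(x)\bigr)^2=N(x)^2D_{14}(x)^2$. This step uses the fact that $N(x)D_{14}(x)$ is real, so its square is nonnegative. Proposition~\ref{prop:spectralD} gives $D_{14}(x)\ge0$, but only realness is needed here, and that holds because $D_{14}$ is a real polynomial. Lemma~\ref{lem:nonnegative} with $n=5$ then says $\det L_x\ge0$, so $\det L_x=|\det L_x|$, which yields \eqref{eq:main}. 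The coordinate formula for $N$ is simply the Euclidean expression from Section~2.

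The only potential obstacle lies upstream rather than in this proof. The identity rests on the spectral interpretation, namely that the Newton recursion applied to the power sums $q_1,\dots,q_7$ of exactly seven numbers returns their product. This holds because Newton's identities for $e_m$ with $m\le 7$ involve only $p_1,\dots,p_7$. Once that is granted, the main theorem is a direct assembly of Proposition~\ref{prop:detBfactor}, \eqref{eq:detB} and Lemma~\ref{lem:nonnegative}.

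One could add a remark that the identity is also a polynomial identity on $\R^{32}$. Both sides are polynomials that agree at every point, so they agree as polynomials.
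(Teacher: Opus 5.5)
Your proposal is correct and matches the paper's proof: the paper also obtains $|\det L_x|=N(x)^2D_{14}(x)^2$ from \eqref{eq:absfactor}, which comes from Proposition~\ref{prop:detBfactor} together with \eqref{eq:detB}, and then removes the absolute value with Lemma~\ref{lem:nonnegative}. Your separate treatment of $x=0$ is harmless but redundant, since Proposition~\ref{prop:detBfactor} already covers that case.
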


\begin{proof}
Equation \eqref{eq:absfactor} gives
\[
|\det L_x|=N(x)^2D_{14}(x)^2.
\]
By Lemma~\ref{lem:nonnegative}, $\det L_x\ge0$, so the absolute value may be
removed.
\end{proof}

The degrees are consistent:
\[
\deg N^2=4,\qquad \deg D_{14}^2=28,
\]
and therefore
\[
4+28=32=\deg(\det L_x).
\]

\section{Basic bounds and equality}
The trace identity in Proposition~\ref{prop:q1}, together with the spectral
interpretation, yields an immediate useful bound for $D_{14}$.

\begin{proposition}[Upper bound]\label{prop:bound}
For every $x\in A_5$,
\begin{equation}
0\le D_{14}(x)\le N(x)^7.
\label{eq:D-bound}
\end{equation}
Consequently,
\begin{equation}
0\le \det L_x\le N(x)^{16}=\Nrm{x}^{32}.
\label{eq:det-bound}
\end{equation}
Moreover, $D_{14}(1)=1$.
\end{proposition}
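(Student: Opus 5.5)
The plan is to handle $x=0$ separately and, for $x\ne0$, to combine the spectral product of Proposition~\ref{prop:spectralD} with the trace identity of Proposition~\ref{prop:q1} via AM--GM.

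\textbf{The case $x=0$.} Both $D_{14}$ and $N^7$ are homogeneous of positive degree, so $D_{14}(0)=0=N(0)^7$ and \eqref{eq:D-bound} holds trivially.

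\textbf{The lower bound.} Fix $x\ne0$ and choose $\beta_1=N(x),\beta_2,\ldots,\beta_8$ as in Lemma~\ref{lem:block}. Proposition~\ref{prop:spectralD} gives $D_{14}(x)=\beta_2\cdots\beta_8$ with every $\beta_j\ge0$. This immediately yields $D_{14}(x)\ge0$.

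\textbf{The upper bound.} From the proof of Proposition~\ref{prop:spectralD}, $q_1(x)=\sum_{j=2}^8\beta_j$. Proposition~\ref{prop:q1} gives $q_1(x)=7N(x)$. Since the seven numbers $\beta_2,\ldots,\beta_8$ are nonnegative, AM--GM gives
\[
\beta_2\cdots\beta_8\le\Bigl(\tfrac{1}{7}\textstyle\sum_{j=2}^8\beta_j\Bigr)^7=N(x)^7 .
\]

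\textbf{The determinant bound.} By Theorem~\ref{thm:main}, $\det L_x=N^2D_{14}^2$. Squaring \eqref{eq:D-bound}, which is legitimate because all quantities are nonnegative, gives
\[
0\le \det L_x\le N^2\cdot N^{14}=N^{16}=\Nrm{x}^{32}.
\]

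\textbf{The value $D_{14}(1)=1$.} Here $L_1=I$, so $B_1=I$ and $N(1)=1$. Hence every $\beta_j=1$, and the product formula gives $D_{14}(1)=1$. Alternatively, one can compute directly: $q_k(1)=\tfrac14\cdot32-1=7$ for every $k$, so Newton's identities return the elementary symmetric functions of seven ones, and $E_7=1$.

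No step is a real obstacle. The only point needing care is to apply AM--GM to the nonnegative eigenvalue blocks rather than to polynomial expressions, which is exactly what the spectral interpretation provides.
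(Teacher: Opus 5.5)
Your proof is correct and follows essentially the same route as the paper: the case $x=0$ is handled by homogeneity. For $x\ne0$, you use the spectral product $D_{14}=\beta_2\cdots\beta_8$ with $\beta_1=N$ and the trace identity $q_1=7N$ to apply AM--GM to the seven nonnegative blocks, then invoke the main theorem for the determinant bound and $B_1=I$ for $D_{14}(1)=1$. Your alternative verification via $q_k(1)=7$ and Newton's identities is a valid extra check that does not depend on the block decomposition.
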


\begin{proof}
The case $x=0$ is immediate.  Suppose $x\ne0$ and choose the fourfold blocks
$\beta_1=N,\beta_2,\ldots,\beta_8$ as in Lemma~\ref{lem:block}.  By
Proposition~\ref{prop:q1},
\[
\sum_{j=1}^{8}\beta_j=\frac14\tr(B_x)=8N,
\]
so
\[
\sum_{j=2}^{8}\beta_j=7N.
\]
Since all $\beta_j\ge0$, the arithmetic--geometric mean inequality gives
\[
D_{14}=\prod_{j=2}^{8}\beta_j
\le\left(\frac{\beta_2+\cdots+\beta_8}{7}\right)^7=N^7.
\]
Nonnegativity follows from Proposition~\ref{prop:spectralD}, and
\eqref{eq:det-bound} then follows from the main theorem.  For $x=1$, one has
$B_1=I$, so every $\beta_j$ equals $1$ and hence $D_{14}(1)=1$.
\end{proof}

\begin{proposition}[Equality case]\label{prop:equality}
Let $x\ne0$.  Then
\[
D_{14}(x)=N(x)^7
\]
if and only if $M_x=I$.  Hence, in the sense of \cite[Definition~4.1 and the
discussion following it]{BCDI2009}, this is equivalent to $x$ being an
alternative element.
\end{proposition}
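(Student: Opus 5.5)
The plan is to use the equality case of the arithmetic--geometric mean inequality already invoked in the proof of Proposition~\ref{prop:bound}. Fix $x\ne0$ and choose fourfold blocks $\beta_1=N(x),\beta_2,\ldots,\beta_8$ as in Lemma~\ref{lem:block}. By Proposition~\ref{prop:spectralD}, $D_{14}(x)=\prod_{j=2}^8\beta_j$ with all $\beta_j\ge0$. By Proposition~\ref{prop:q1}, $\sum_{j=2}^8\beta_j=7N(x)$.

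For the forward direction, suppose $D_{14}(x)=N(x)^7$. Then equality holds in the AM--GM inequality for the nonnegative numbers $\beta_2,\ldots,\beta_8$, whose arithmetic mean is $N(x)$. Equality forces $\beta_2=\cdots=\beta_8=N(x)$; note $N(x)>0$, so no degenerate zero case intervenes. The characteristic polynomial \eqref{eq:charblock} is then $(t-N(x))^{32}$. Since $B_x$ is real symmetric and hence diagonalizable, $B_x=N(x)I$, that is, $M_x=I$.

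For the converse, suppose $M_x=I$. Then $B_x=N(x)I$, so every $\beta_j$ equals $N(x)$ by the uniqueness of the eigenvalue multiset. Proposition~\ref{prop:spectralD} then gives $D_{14}(x)=N(x)^7$. The only subtlety is that the $\beta_j$ are defined only up to relabeling, but the product $\prod_{j\ge2}\beta_j$ is well defined once one copy of $N$ is removed, which is exactly what Proposition~\ref{prop:spectralD} provides.

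The final sentence, equivalence with $x$ being alternative, is not really a proof step. It is a translation into the terminology of \cite{BCDI2009}. There, by Definition~4.1 and the discussion following it, $x$ is called alternative precisely when $M_x=I$, i.e.\ $L_{x^*}L_x=N(x)I$, equivalently $x^*(xy)=N(x)y$ for all $y$. I would simply cite that definition and discussion. If one wants a self-contained check, $M_x=I$ means $\langle xy,xy\rangle=N(x)N(y)$ for all $y$ by polarization, which is the norm-multiplicativity characterization used there. I expect this identification of conventions, rather than the inequality argument, to be the only place requiring care. The spectral part is routine given Propositions~\ref{prop:spectralD} and~\ref{prop:q1}.
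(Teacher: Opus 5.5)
Your proposal is correct and follows the paper's own argument. Equality in AM--GM for $\beta_2,\ldots,\beta_8$, whose sum is $7N$, forces $\beta_2=\cdots=\beta_8=N$, hence $B_x=N(x)I$ by diagonalizability, and the link to alternative elements is taken from \cite{BCDI2009}; you just spell out the converse direction and the diagonalizability step more explicitly than the paper does.
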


\begin{proof}
Equality in the arithmetic--geometric mean inequality occurs if and only if
\[
\beta_2=\cdots=\beta_8=N.
\]
Since $\beta_1=N$ as well, this is equivalent to $B_x=NI$, or equivalently
$M_x=I$.  The final equivalence is the characterization of alternative
elements in \cite{BCDI2009} via the extremal eigenvalues.
\end{proof}

\section{Zero divisors}
\begin{corollary}[Polynomial criterion for left zero divisors]\label{cor:zd}
Let $x\in A_5\setminus\{0\}$.  Then
\begin{equation}
\boxed{
 x\text{ is a left zero divisor}\quad\Longleftrightarrow\quad D_{14}(x)=0
}
\label{eq:zd}
\end{equation}
\end{corollary}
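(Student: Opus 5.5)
The plan is to go through the determinant. As recalled in the introduction, a nonzero $x$ is a left zero divisor precisely when $\ker L_x\ne0$. Since $L_x$ is a linear endomorphism of the finite-dimensional real space $A_5$, this happens exactly when $\det L_x=0$. So the first step is simply to record the chain
\[
x\text{ is a left zero divisor}\iff \ker L_x\ne 0\iff \det L_x=0 .
\]

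The second step invokes Theorem~\ref{thm:main}, which gives $\det L_x=N(x)^2D_{14}(x)^2$. Because $x\ne0$ and the inner product is positive definite, we have $N(x)=\Nrm{x}^2>0$, so $N(x)^2\ne0$. Consequently $\det L_x=0$ holds if and only if $D_{14}(x)^2=0$, that is, if and only if $D_{14}(x)=0$. Combining this with the first step gives \eqref{eq:zd}.

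As a cross-check, and as an alternative route that bypasses the sign lemma, one can argue spectrally. By \eqref{eq:detB}, $\ker L_x=\ker B_x$ (indeed $B_xy=0$ implies $\Nrm{L_xy}^2=\ip{B_xy}{y}=0$). Thus $x$ is a left zero divisor if and only if $0$ is an eigenvalue of $B_x$, which means some $\beta_j(x)=0$ in Lemma~\ref{lem:block}. Since $\beta_1=N(x)>0$, that zero must lie among $\beta_2,\ldots,\beta_8$. By Proposition~\ref{prop:spectralD}, this is equivalent to $D_{14}(x)=\prod_{j\ge2}\beta_j=0$.

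There is no real obstacle here. The only point requiring care is that the norm factor cannot vanish for nonzero $x$, and positive definiteness of $N$ settles that.
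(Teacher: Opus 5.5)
Your main argument is exactly the paper's proof: reduce to $\det L_x=0$, then cancel the nonvanishing factor $N(x)^2$ in Theorem~\ref{thm:main}. Your spectral cross-check is also valid, with one small citation point: the equality $\ker L_x=\ker B_x$ comes from your parenthetical argument $\Nrm{L_xy}^2=\ip{B_xy}{y}$, not from \eqref{eq:detB}.
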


\begin{proof}
For the finite-dimensional linear map $L_x$, the element $x$ is a left zero
divisor if and only if $\ker L_x\ne0$, equivalently $\det L_x=0$.  Since
$x\ne0$ implies $N(x)>0$, the main theorem gives
\[
\det L_x=0
\iff D_{14}(x)^2=0
\iff D_{14}(x)=0.
\]
\end{proof}

\begin{remark}
It is also shown in \cite[Lemma 3.8]{BCDI2009} that
$\ker M_x=\ker L_x$.  From Proposition~\ref{prop:spectralD}, the condition
$D_{14}(x)=0$ means that at least one of the nontrivial fourfold spectral
blocks has eigenvalue zero.
\end{remark}

\section{Comparison with the sedenion subalgebra}
We identify $A_4$ with the subalgebra $A_4\times\{0\}\subset A_5$.  For
$a\in A_4$, write $\widetilde a=(a,0)\in A_5$.

\begin{remark}[Meaning of the subscripts]
The subscript $14$ in $D_{14}$ denotes the degree of the polynomial.  In the
author's previous work \cite{Koebisu2026}, by contrast, the subscripts in
$D_1,D_2$ enumerate factors and have a different meaning.
\end{remark}

\begin{lemma}\label{lem:restriction}
\[
\det L_{\widetilde a}^{A_5}=\bigl(\det L_a^{A_4}\bigr)^2.
\]
\end{lemma}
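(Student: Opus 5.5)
The plan is to compute $L_{\widetilde a}$ in block form using the product formula \eqref{eq:product}, and then take determinants blockwise. Decompose $A_5=A_4\times A_4$ and write a general element as $(c,d)$ with $c,d\in A_4$. Applying \eqref{eq:product} with $b=0$ gives
\[
(a,0)(c,d)=(ac-d^*\cdot 0,\; da+0\cdot c^*)=(ac,\;da).
\]
So $L_{\widetilde a}$ is block diagonal with respect to $A_5=A_4\oplus A_4$:
\[
L_{\widetilde a}^{A_5}=\begin{pmatrix}L_a^{A_4}&0\\0&R_a^{A_4}\end{pmatrix},
\qquad\text{hence}\qquad
\det L_{\widetilde a}^{A_5}=\det L_a^{A_4}\cdot\det R_a^{A_4}.
\]

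It then remains to show that $\det R_a=\det L_a$ on $A_4$. I would use conjugation. Let $C(y)=y^*$. The identity $(yx)^*=x^*y^*$ holds in every Cayley--Dickson algebra; it follows by induction from \eqref{eq:conj} and \eqref{eq:product}, or can be cited from \cite{BCDI2009}. From it,
\[
C R_a C=L_{a^*},\qquad\text{so}\qquad \det R_a=\det L_{a^*}.
\]
By Lemma~\ref{lem:adjoint}, $L_{a^*}=L_a^{*}$ is the transpose of $L_a$, and therefore $\det L_{a^*}=\det L_a$. Combining these gives
\[
\det L_{\widetilde a}^{A_5}=(\det L_a^{A_4})^2.
\]

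The only point requiring care is the anti-automorphism identity $(yx)^*=x^*y^*$. If it is not quoted from \cite{BCDI2009}, I would verify it inductively. Write $x=(a,b)$ and $y=(c,d)$. By \eqref{eq:conj} and \eqref{eq:product},
\[
\bigl((a,b)(c,d)\bigr)^*=\bigl((ac-d^*b)^*,\;-(da+bc^*)\bigr).
\]
On the other side,
\[
(c,d)^*(a,b)^*=(c^*,-d)(a^*,-b)=\bigl(c^*a^*-b^*d,\;-ba^*\!\cdot\!\ldots\bigr),
\]
where the second component must be computed carefully from \eqref{eq:product}. The two sides are then matched componentwise using the induction hypothesis together with $(d^*b)^*=b^*d$. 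This bookkeeping is routine, but it is the step most prone to sign errors, so I would check it explicitly.
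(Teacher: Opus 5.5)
Your proposal is correct and is essentially the paper's proof: you get the block form with diagonal blocks $L_a^{A_4}$ and $R_a^{A_4}$ from $(a,0)(c,d)=(ac,da)$, then use $CR_aC=L_{a^*}=L_a^*$ to conclude $\det R_a=\det L_a$. The only difference is that you offer to verify $(yx)^*=x^*y^*$ by induction, which the paper takes for granted; that check does go through, since the second component of $(c^*,-d)(a^*,-b)$ is $-bc^*-da$, which matches $-(da+bc^*)$.
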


\begin{proof}
By \eqref{eq:product},
\[
(a,0)(c,d)=(ac,da).
\]
Hence, with respect to $A_5=A_4\oplus A_4$,
\[
L_{\widetilde a}^{A_5}
=\begin{pmatrix}
L_a^{A_4}&0\\
0&R_a^{A_4}
\end{pmatrix}.
\]
Let $C(y)=y^*$ denote conjugation.  Then $CR_aC=L_{a^*}$, and by
Lemma~\ref{lem:adjoint}, $L_{a^*}=L_a^*$.  Thus
$\det R_a=\det L_a$, proving the claim.
\end{proof}

Using the notation of the author's previous work \cite{Koebisu2026}, write
\[
a=v_1+v_2e_8,\qquad
v_1=x_1e_0+u,\qquad
v_2=x_2e_0+w,
\qquad u,w\in\operatorname{Im}(\mathbb O).
\]
Then
\begin{align}
D_2(a)
&=N(a)^2
 -4\bigl(\Nrm{u}^2\Nrm{w}^2-\ip{u}{w}^2\bigr),
\label{eq:sedenionD2}\\
\det L_a^{A_4}
&=N(a)^4D_2(a)^2.
\label{eq:sedeniondet}
\end{align}
Therefore, Lemma~\ref{lem:restriction} and the main theorem imply
\begin{equation}
D_{14}(a,0)
=N(a)^3
\left[
N(a)^2-4\bigl(\Nrm{u}^2\Nrm{w}^2-\ip{u}{w}^2\bigr)
\right]^2.
\label{eq:restriction-explicit}
\end{equation}

The difference becomes clearer when comparing the reduced factors in the
general construction.  Let $E_3^{(4)}$ denote the degree-$6$ polynomial
obtained by applying Theorem~\ref{thm:general} to $A_4$.  Then
\[
\det L_a^{A_4}=N(a)^2\bigl(E_3^{(4)}(a)\bigr)^2.
\]
Comparing this identity with \eqref{eq:sedeniondet}, and using the
nonnegativity of $E_3^{(4)}$ as a spectral product together with the
nonnegativity of $D_2$ proved in \cite{Koebisu2026}, gives
\begin{equation}
E_3^{(4)}(a)=N(a)D_2(a).
\label{eq:A4-extra-N}
\end{equation}
Thus, in $A_4$, the reduced factor itself always contains $N$ as a factor.
This is consistent with \cite[Corollary~7.3]{BCDI2009}, where under the stated
generic hypotheses the eigenvalue $1$ has multiplicity $8$, corresponding to
two fourfold blocks.  In degenerate limits the eigenvalues may merge and the
multiplicity may increase further.

\subsection{The norm does not divide $D_{14}$ on $A_5$}
In contrast with \eqref{eq:A4-extra-N}, no norm factor can be factored from
$D_{14}$ in general on $A_5$.  We prove this by an exact one-point evaluation.

Complexify both $A_5$ and the polynomials $N,D_{14}$, and write
$A_{5,\mathbb C}:=A_5\otimes_{\R}\mathbb C$.

\begin{proposition}[Absence of a norm factor]\label{prop:N-not-divide}
\[
N\nmid D_{14}
\qquad\text{in }\mathbb R[x_0,\ldots,x_{31}].
\]
\end{proposition}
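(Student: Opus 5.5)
Since $N$ is irreducible over $\mathbb C$ (a nondegenerate quadratic form in $32\ge3$ variables), divisibility $N\mid D_{14}$ in $\R[x_0,\ldots,x_{31}]$ would force $D_{14}$ to vanish on the whole complex null quadric $\{z\in A_{5,\mathbb C}:N(z)=0\}$. It therefore suffices to exhibit one explicit point $z\in A_{5,\mathbb C}$, with Gaussian-rational coordinates, such that $N(z)=0$ but $D_{14}(z)\ne0$. Here $N$ and $D_{14}$ are extended to polynomial functions on $A_{5,\mathbb C}$, and $L_z$, $B_z=L_{z^*}L_z$ are extended $\mathbb C$-linearly. The Newton recursion of Definition~\ref{def:D14} then still computes $D_{14}(z)$ from $q_k(z)=\tfrac14\tr(B_z^k)-N(z)^k$.

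At such a point $N(z)=0$, so $q_k(z)=\tfrac14\tr(B_z^k)$. Moreover, $D_{14}(z)$ is the seventh elementary symmetric function of quantities whose power sums are $\tfrac14\tr(B_z^k)$, $k\le7$. Concretely, $D_{14}(z)$ equals $e_7$ of the eight ``quarter-multiplicity'' eigenvalue blocks, computed formally from the power sums. I would not appeal to the fourfold multiplicity theorem at complex points. Instead I would compute $\tr(B_z^k)$ for $k=1,\ldots,7$ directly as exact Gaussian rationals and run the recursion~\eqref{eq:newton}.

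Choice of point. To keep the computation tractable, I would take $z$ supported on few basis elements, but not inside a copy of $A_4$. By~\eqref{eq:A4-extra-N} and Lemma~\ref{lem:restriction}, $D_{14}(a,0)=N(a)^3D_2(a)^2$ vanishes on the null quadric of $A_4$, so such points are useless. A natural try is
\[
z=e_1+i\,e_{2}+\alpha(e_{4}+i e_{8})+\beta(e_{16}+ie_{31}),
\]
with small rationals $\alpha,\beta$; this has $N(z)=0$ automatically because each pair contributes $1+i^2=0$. One must ensure the support genuinely uses the doubling in $A_5$ and involves a triple of imaginary units lying in no associative or sedenion subalgebra. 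I would build $L_z$ as an exact $32\times32$ matrix over $\mathbb Q(i)$ from~\eqref{eq:product}, form $B_z$, and compute its power traces in exact arithmetic. Then $E_1,\ldots,E_7$ follow from~\eqref{eq:newton}, with $E_1=q_1=7N(z)=0$ serving as a consistency check via Proposition~\ref{prop:q1}.

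The main obstacle is finding a point where $D_{14}(z)\ne0$. Many symmetric choices land on loci where extra eigenvalue blocks collapse to $N=0$ or vanish, mirroring the sedenion phenomenon. I would first try a generic random point on the quadric, with Gaussian-integer coordinates on all $32$ basis elements subject to $\sum z_j^2=0$. For such a point $D_{14}(z)\ne0$ is expected unless $N\mid D_{14}$. Only then would I search for a sparse certificate that is easier to verify. The final certificate is a single exact nonzero value of $D_{14}(z)$ together with $N(z)=0$, which proves $N\nmid D_{14}$ in $\mathbb C[x]$ and hence in $\R[x]$.
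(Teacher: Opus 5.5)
Your reduction is the paper's: complexify, note that $N\mid D_{14}$ would give $D_{14}=NQ$ and hence $D_{14}(z)=0$ at every $z\in A_{5,\mathbb C}$ with $N(z)=0$, and then evaluate $D_{14}$ at one such point using exact traces of $B_z^k$ and the Newton recursion. Irreducibility of $N$ plays no role in that implication. It only explains why a generic null point ought to work if the proposition is true.

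The genuine gap is that you stop before the one substantive step: you never produce a point with $N(z)=0$ and $D_{14}(z)\neq0$. Your candidate $e_1+ie_2+\alpha(e_4+ie_8)+\beta(e_{16}+ie_{31})$ is never evaluated. A priori you only know that $\beta\neq0$ is necessary, since for $\beta=0$ the point lies in the complexified $A_4$, where $D_{14}=N^3D_2^2$ vanishes on the null cone. Your fallback, ``a generic null point gives $D_{14}(z)\neq0$ unless $N\mid D_{14}$'', is conditional on the very statement you are proving, so it cannot replace the computation. For this proposition the certificate \emph{is} the proof.

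The paper fills exactly this step with the point $z=e_4+e_{10}-e_{23}+i(e_{11}-e_{16}+e_{18})$.
\begin{itemize}
\item $N(z)=3-3=0$.
\item Exact computation with the Gaussian-integer matrix $B_z$ gives $(q_1,\ldots,q_7)=(0,-16,-96,-64,-640,-1024,14336)$.
\item The recursion then yields $(E_1,\ldots,E_7)=(0,8,-32,48,-384,896,-512)$, so $D_{14}(z)=-512\neq0$.
\end{itemize}
To complete your argument, you must commit to a specific null point and report its exact power traces in the same checkable way. The consistency check $q_1=7N(z)=0$ you mention is a useful sanity test for that data.
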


\begin{proof}
In $A_{5,\mathbb C}$, set
\begin{equation}
z=e_4+e_{10}-e_{23}
+i(e_{11}-e_{16}+e_{18}),
\label{eq:certificate-z}
\end{equation}
where $i^2=-1$ is the imaginary unit of the coefficient field $\mathbb C$ and
is distinct from the Cayley--Dickson basis elements.  Orthogonality of the
standard basis gives
\[
N(z)=1+1+1+i^2+i^2+i^2=0.
\]

Extend \eqref{eq:conj}--\eqref{eq:product} $\mathbb C$-linearly and compute
$B_z=L_{z^*}L_z$ exactly.  The power sums from Definition~\ref{def:qk} are
\begin{equation}
(q_1,q_2,q_3,q_4,q_5,q_6,q_7)
=(0,-16,-96,-64,-640,-1024,14336).
\label{eq:q-certificate}
\end{equation}
Newton's recursion \eqref{eq:newton} yields
\[
(E_0,E_1,E_2,E_3,E_4,E_5,E_6,E_7)
=(1,0,8,-32,48,-384,896,-512).
\]
Therefore
\[
D_{14}(z)=E_7(z)=-512\ne0.
\]
If $N$ divided $D_{14}$ in $\mathbb R[x_0,\ldots,x_{31}]$, then after
complexification we would still have $D_{14}=NQ$ for some polynomial $Q$.
The equality $N(z)=0$ would then force $D_{14}(z)=0$, a contradiction.
\end{proof}

\begin{remark}
Proposition~\ref{prop:N-not-divide} does not assert that $D_{14}$ is
irreducible.  It shows only that the quadratic norm polynomial $N$ is not a
factor of $D_{14}$.
\end{remark}

\section{A general consequence in dimension $2^n$}
The proof of the main theorem uses almost no computation specific to $A_5$.
The same argument applies to $A_n$ once one uses the fourfold multiplicity of
eigenspaces and the existence of the eigenvalue $1$.

\begin{theorem}[General form]\label{thm:general}
Let $n\ge2$ and put $m=2^{n-2}-1$.  For $x\in A_n$, set
\[
B_x=L_{x^*}L_x,\qquad N(x)=\Nrm{x}^2,
\]
and define
\[
q_k^{(n)}(x)=\frac14\tr(B_x^k)-N(x)^k
\qquad (1\le k\le m).
\]
Let $E_m^{(n)}$ be constructed from these power sums by Newton's recursion.
Then $E_m^{(n)}$ is a homogeneous polynomial of degree
\[
2m=2^{n-1}-2,
\]
and
\begin{equation}
\det L_x=N(x)^2\bigl(E_m^{(n)}(x)\bigr)^2.
\label{eq:general}
\end{equation}
\end{theorem}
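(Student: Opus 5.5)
The plan is to copy the $A_5$ argument verbatim, with $32$ replaced by $2^n$ and $8$ fourfold blocks replaced by $2^{n-2}=m+1$ blocks. Homogeneity of $E_m^{(n)}$ of degree $2m$ follows exactly as in Lemma~\ref{lem:qhomogeneous} and Proposition~\ref{prop:Dpoly}. Each $q_k^{(n)}$ is a homogeneous polynomial of degree $2k$, because the entries of $L_x$ are linear in $x$. The Newton recursion \eqref{eq:newton} then preserves degree $2(m-k)+2k$ by induction. The case $x=0$ is trivial: both sides vanish, since $E_m^{(n)}(0)=0$ for $m\ge1$ by homogeneity. The degenerate case $n=2$, where $m=0$ and $E_0=1$, must be checked separately. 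There $\det L_x=N(x)^2$ in the quaternions, which is consistent with the statement.

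For $x\ne0$, I would establish the block representation as in Lemma~\ref{lem:block}. Theorem~\ref{thm:fourfold} gives multiplicities divisible by four, and $\dim A_n=2^n=4(m+1)$, so
\[
\chi_{B_x}(t)=\prod_{j=1}^{m+1}(t-\beta_j)^4,\qquad \beta_j\ge0.
\]
By Theorem~\ref{thm:one} we may take $\beta_1=N(x)$. Then $\tfrac14\tr(B_x^k)=\sum_j\beta_j^k$, so
\[
q_k^{(n)}=\sum_{j=2}^{m+1}\beta_j^k,
\]
the power sums of exactly $m$ numbers. Newton's identities for $m$ variables determine $e_1,\dots,e_m$ from $p_1,\dots,p_m$ alone, and this is why only $k\le m$ is needed. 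Hence $E_m^{(n)}=\prod_{j\ge2}\beta_j\ge0$. This identification is the only place requiring care. The number of nontrivial blocks must match the index $m$, so that $E_m$ is the top elementary symmetric function rather than some intermediate one. The count $2^{n-2}-1$ confirms this.

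Next, $\det B_x=\prod_j\beta_j^4=(N E_m^{(n)})^4$, and \eqref{eq:detB} gives $(\det L_x)^2=(N E_m^{(n)})^4$. Hence $|\det L_x|=N^2(E_m^{(n)})^2$. Finally, Lemma~\ref{lem:nonnegative}, which is already stated for all $n\ge1$, gives $\det L_x\ge0$ and removes the absolute value.

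I expect no genuine obstacle, since all the inputs (Theorems~\ref{thm:fourfold}, \ref{thm:one} and Lemma~\ref{lem:nonnegative}) hold for general $n\ge2$. The only step needing attention is the bookkeeping that aligns the number of blocks with $m$, together with the $n=2$ edge case.
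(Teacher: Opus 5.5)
Your proposal is correct and follows the paper's proof essentially verbatim. The paper uses Theorem~\ref{thm:fourfold} to group the spectrum into $2^{n-2}$ fourfold blocks and Theorem~\ref{thm:one} to fix one block equal to $N(x)$. It then uses Newton's identities to identify $E_m^{(n)}$ with the product of the remaining $m$ blocks, and finally uses \eqref{eq:detB} and Lemma~\ref{lem:nonnegative} to remove the sign. Your separate check of $n=2$ is harmless but unnecessary, since the same argument with an empty product already yields $\det L_x=N(x)^2$.
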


\begin{proof}
By Theorem~\ref{thm:fourfold}, the $2^n$ eigenvalues of $B_x$ can be counted as
$2^{n-2}$ fourfold blocks.  By Theorem~\ref{thm:one}, one block may be chosen
to be $N(x)$.  The remaining $m=2^{n-2}-1$ blocks have power sums
$q_k^{(n)}$, so Newton's identities show that $E_m^{(n)}$ is their product.
Hence
\[
\det B_x=\bigl(N(x)E_m^{(n)}(x)\bigr)^4.
\]
Using \eqref{eq:detB} and Lemma~\ref{lem:nonnegative} gives
\eqref{eq:general}.  The degree statement follows by the same induction as in
Proposition~\ref{prop:Dpoly}.
\end{proof}

For $n=5$, one has $m=7$ and $E_7^{(5)}=D_{14}$.  For $n=2$, one has $m=0$;
interpreting the empty product as $E_0^{(2)}=1$ gives
\[
\det L_x=N(x)^2,
\]
as expected for the quaternions.

\section{Discussion}
The main theorem translates the known fourfold eigenspace structure into a
polynomial identity at the determinant level.  Biss--Christensen--Dugger--Isaksen
ask in Question~9.5 for the study of characteristic polynomials in higher
Cayley--Dickson algebras \cite[Question~9.5]{BCDI2009}.  The present work does
not determine the full characteristic polynomial, but it gives a partial
answer at the level of its constant term by constructing the basis-independent
degree-$14$ polynomial $D_{14}$.

The comparison between $A_4$ and $A_5$ is particularly significant.  In
$A_4$, equation \eqref{eq:A4-extra-N} shows that the reduced factor
$E_3^{(4)}$ contains an additional factor $N$.  Proposition~\ref{prop:N-not-divide}
shows that this no longer holds in $A_5$.  Thus, although the general
factorization in Theorem~\ref{thm:general} can refine further in lower
dimensions, at least the same additional norm factor does not persist in
$A_5$.

Proposition~\ref{prop:bound} also gives the sharp bounds
\[
0\le D_{14}\le N^7,
\qquad
0\le\det L_x\le\Nrm{x}^{32}.
\]
The equality case corresponds to $M_x=I$, equivalently to alternative
elements.  This suggests that $D_{14}$ may be interpreted not only as a
zero-divisor detector, but also as a measure of the deviation of left
multiplication from the norm-multiplicative case.

The following questions remain open.
\begin{enumerate}
  \item Does $D_{14}$ admit any further nontrivial factorization over
  $\mathbb R[x_0,\ldots,x_{31}]$ or $\mathbb Q[x_0,\ldots,x_{31}]$?
  Proposition~\ref{prop:N-not-divide} resolves only the possibility of $N$ as
  a factor.
  \item Can $D_{14}$ be expressed using a small collection of geometric
  invariants, such as norms, inner products, or associators?
  \item Can the coefficient ring of $D_{14}$ be determined more precisely?
  The definition involves the factor $1/4$ and denominators in Newton's
  recursion, so the question whether
  $D_{14}\in\mathbb Z[x_0,\ldots,x_{31}]$ requires separate analysis.
  \item How can the strata of the zero-divisor variety $D_{14}=0$ be
  described according to $\dim\ker L_x$?  The fourfold multiplicity implies
  that the kernel dimension is divisible by four, and in $A_5$ the general
  upper bound is $16$ \cite{BDI2008}.
\end{enumerate}

Question~9.2 of \cite{BCDI2009} reports computer evidence for an element of
$A_5$ for which the eigenvalue $1$ has multiplicity $4$.  This supports a
contrast with the generic multiplicity-$8$ structure appearing in
\cite[Corollary~7.3]{BCDI2009} for $A_4$, but we do not claim that multiplicity
$4$ occurs for every generic element of $A_5$.

\section{Conclusion}
For the 32-dimensional Cayley--Dickson algebra $A_5$, we defined a homogeneous
polynomial $D_{14}$ of degree $14$ from the trace power sums of
\[
B_x=L_{x^*}L_x
\]
via Newton's identities.  Using the known fourfold multiplicity of eigenspaces
and the existence of the eigenvalue $1$, we proved that
\[
\det L_x=N(x)^2D_{14}(x)^2
\]
for every $x\in A_5$.  For nonzero elements, we further obtained
\[
x\text{ is a left zero divisor}\iff D_{14}(x)=0.
\]

We also proved
\[
0\le D_{14}(x)\le N(x)^7,
\]
with equality characterized by alternative elements.  On the embedded
sedenion subalgebra, the reduced factor contains an additional norm factor,
whereas an explicit exact certificate after complexification shows that
\[
N\nmid D_{14}
\]
on $A_5$.  This provides a concrete change in determinant-factorization
structure from $A_4$ to $A_5$.

The construction extends to general $A_n$, but the further factorization,
integrality, geometric interpretation, and zero-divisor stratification of
$D_{14}$ remain open problems.

\appendix
\section{Explicit Newton formula for $D_{14}$}
In terms of the invariants $q_1,\ldots,q_7$ from Definition~\ref{def:qk},
Newton's identities give
\begin{align}
D_{14}=\frac1{5040}\Bigl(&
q_1^7-21q_1^5q_2+70q_1^4q_3+105q_1^3q_2^2
-210q_1^3q_4\notag\\
&-420q_1^2q_2q_3+504q_1^2q_5-105q_1q_2^3
+630q_1q_2q_4\notag\\
&+280q_1q_3^2-840q_1q_6+210q_2^2q_3-504q_2q_5
-420q_3q_4+720q_7
\Bigr).
\label{eq:explicitD14}
\end{align}
Every term on the right has total degree $14$, because $q_k$ has degree
$2k$.

Substituting $q_1=7N$ from Proposition~\ref{prop:q1}, one obtains a formula
using only $N,q_2,\ldots,q_7$:
\begin{align}
D_{14}={}&
\frac{117649}{720}N^7
-\frac{16807}{240}N^5q_2
+\frac{2401}{72}N^4q_3
+\frac{343}{48}N^3q_2^2
-\frac{343}{24}N^3q_4\notag\\
&-\frac{49}{12}N^2q_2q_3
+\frac{49}{10}N^2q_5
-\frac{7}{48}Nq_2^3
+\frac{7}{8}Nq_2q_4
+\frac{7}{18}Nq_3^2
-\frac{7}{6}Nq_6\notag\\
&+\frac1{24}q_2^2q_3
-\frac1{10}q_2q_5
-\frac1{12}q_3q_4
+\frac17q_7.
\label{eq:explicitD14-simplified}
\end{align}
Formula \eqref{eq:explicitD14-simplified} reduces the independent trace
invariants required for explicit computation to $q_2,\ldots,q_7$.

\section{Exact certificate for Proposition~\ref{prop:N-not-divide}}
Let $z$ be the element defined in \eqref{eq:certificate-z}.  Since $N(z)=0$,
\eqref{eq:q-certificate} gives directly
\[
\frac14\tr(B_z^k)=q_k(z)
\qquad(1\le k\le7).
\]
Hence
\[
(\tr B_z,\tr B_z^2,\ldots,\tr B_z^7)
=(0,-64,-384,-256,-2560,-4096,57344).
\]
These values are obtained by exact matrix multiplication and traces of the
$32\times32$ matrix $B_z$ with Gaussian-integer coefficients.  Applying
Newton's recursion successively yields
\begin{align*}
E_1&=0, & E_2&=8, & E_3&=-32, & E_4&=48,\\
E_5&=-384, & E_6&=896, & E_7&=-512.
\end{align*}
Thus $D_{14}(z)=-512\ne0$.  This appendix supplies a finite exact certificate
over the integers and Gaussian integers, rather than a numerical
approximation.

\end{document}